\newtheorem{thm}{Theorem}
\newtheorem{cor}[thm]{Corollary}
\newtheorem{con}{Conjecture}
\theoremstyle{remark}
\newtheorem{rem}{Remark}
\theoremstyle{definition}
\newcommand{\R}{\mathbb{R}}
\newcommand{\C}{\mathbb{C}}
\newcommand{\Z}{\mathbb{Z}}
\newcommand{\E}{\mathbb{E}}
\newcommand{\T}{\mathbb{T}}
\newcommand{\diag}{{\rm diag\ }}
\newcommand{\Cov}{{\rm Cov}}
\newcommand\INT{{\rm INT}}
\date{}
\title{A variant of Schur's product theorem\\ and its applications}
\author{Jan Vyb\'iral\footnote{Dept. of Mathematics FNSPE, Czech Technical University in Prague, Trojanova 13, 12000 Prague, Czech Republic;
the research of this author was supported by the grant P201/18/00580S of the Grant Agency of the Czech Republic
and from European Regional Development Fund-Project ``Center for Advanced Applied Science'' (No. CZ.02.1.01/0.0/0.0/16\_019/0000778),
\texttt{jan.vybiral@fjfi.cvut.cz}}}
\begin{document}
\maketitle
\begin{abstract}
We show the following version of the Schur's product theorem.
If $M=(M_{j,k})_{j,k=1}^n\in\R^{n\times n}$ is a positive semidefinite matrix with all entries on the diagonal equal to one, then the matrix
$N=(N_{j,k})_{j,k=1}^n$ with the entries $N_{j,k}=M_{j,k}^2-\frac{1}{n}$ is positive semidefinite.
As a corollary of this result, we prove the conjecture of E. Novak on intractability of numerical integration on the space of trigonometric polynomials
of degree at most one in each variable. Finally, we discuss also some consequences for Bochner's theorem, covariance matrices of $\chi^2$-variables, and
mean absolute values of trigonometric polynomials.
\end{abstract}
{\bf Keywords:} Schur's theorem, positive definite matrices, Bochner's theorem, numerical integration, tractability

\section{Introduction}\label{Sec:1}

Over twenty years ago, motivated by tractability studies of numerical integration, E. Novak \cite{Erich} made the following conjecture:
\begin{con}[E. Novak]\label{con:Erich}
The matrix
$$
\Bigl\{\prod_{i=1}^d\frac{1+\cos(x_{j,i}-x_{k,i})}{2}-\frac{1}{n}\Bigr\}_{j,k=1}^n
$$
is positive semidefinite for all $n,d\ge 2$ and all choices of $x_1,\dots,x_n\in\R^d.$
\end{con}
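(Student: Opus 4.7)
The plan is to reduce Novak's conjecture to the variant of Schur's product theorem announced in the abstract. The key observation is the half-angle identity $\frac{1+\cos\theta}{2}=\cos^2(\theta/2)$, which presents each factor in the product as a perfect square. Once the kernel is written as the entrywise square of a positive semidefinite matrix with ones on the diagonal, the variant of Schur's theorem yields the conclusion directly.

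Concretely, I would introduce the auxiliary matrix $C=(C_{j,k})_{j,k=1}^n$ defined by
\[
C_{j,k}=\prod_{i=1}^d\cos\!\left(\frac{x_{j,i}-x_{k,i}}{2}\right).
\]
For each fixed coordinate $i$, the matrix with entries $\cos((x_{j,i}-x_{k,i})/2)=\langle u_{j,i},u_{k,i}\rangle$, where $u_{j,i}=(\cos(x_{j,i}/2),\sin(x_{j,i}/2))\in\R^2$, is a Gram matrix of unit vectors. Hence it is positive semidefinite with ones on the diagonal, and by the classical Schur product theorem their Hadamard product $C$ inherits both properties. By the half-angle identity, the entries appearing in Novak's conjecture satisfy
\[
\prod_{i=1}^d\frac{1+\cos(x_{j,i}-x_{k,i})}{2}=\prod_{i=1}^d\cos^2\!\left(\frac{x_{j,i}-x_{k,i}}{2}\right)=C_{j,k}^{\,2},
\]
so the matrix in question is exactly $(C_{j,k}^{\,2}-1/n)_{j,k=1}^n$. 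Applying the variant of Schur's theorem to $C$ finishes the proof.

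The real work is thus absorbed entirely by the new Schur-type theorem: once one recognises that $(1+\cos\theta)/2=\cos^2(\theta/2)$ casts the kernel into the shape $M_{j,k}^2-1/n$ with $M$ positive semidefinite and $M_{j,j}=1$, the deduction of Novak's conjecture is almost automatic. In this sense the main obstacle is not in the present reduction but in establishing the variant of Schur's theorem itself, which must control the delicate cancellation between the Hadamard square of $M$ and the rank-one perturbation $\frac{1}{n}J$. It is worth noting that the squaring in the half-angle identity is genuinely needed: individual factors $\cos((x_{j,i}-x_{k,i})/2)$ can be negative, so one cannot avoid passing through Schur's product theorem to obtain a positive semidefinite $C$ before applying the main result.
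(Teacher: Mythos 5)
Your proposal is correct and follows essentially the same route as the paper: apply the half-angle identity to write the kernel as the Hadamard square of the matrix $M=M_1\circ\cdots\circ M_d$ with $(M_i)_{j,k}=\cos((x_{j,i}-x_{k,i})/2)$, note that $M$ is positive semidefinite with unit diagonal, and invoke Corollary~\ref{cor:2}. The only cosmetic difference is that you verify positive semidefiniteness of each $M_i$ by an explicit Gram-vector construction, whereas the paper cites Bochner's theorem; both are standard and equivalent for this purpose.
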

Erich Novak published this conjecture also in NA Digest in November 1997 and tested it numerically.
It also appeared as Open Problem 3 in \cite{NW08}. Nevertheless, it seems that up to now the problem remained unsolved.

Further extensive numerical tests were provided by A. Hinrichs and the author in \cite{HV},
all supporting the belief that Conjecture \ref{con:Erich} is true.
The main difficulty in actually proving this conjecture turned out to be to identify the relevant properties of the function $f(t)=(1+\cos t)/2$,
which play a role in this question.
Led by other numerical tests, \cite{HV} conjectures that the same property is true for all positive positive-definite functions
with the value at zero equal to one. Unfortunately, even in this form, the conjecture remained unsolved.

Our proof of this conjecture, which we present in this note, is based on a certain simple but rather unexpected and apparently unknown property of the Hadamard product.
To state it we need a few simple notations.
We say that $M\in\C^{n\times n}$ is positive semidefinite if $c^*Mc\ge 0$ for all $c\in\C^n.$
Note that by \cite[Theorem 4.1.4.]{HornJohnson} this means that $M$ is necessarily self-adjoint.
By \cite[Theorems 4.1.8. and 4.1.10.]{HornJohnson}, $M\in\R^{n\times n}$ is positive semidefinite if it is symmetric and $c^TMc\ge 0$ for all $c\in\R^n.$
If $M,N\in\C^{n\times n}$, we denote by $M\circ N$ their Hadamard product \cite{Horn}, i.e.,
a matrix with entries $(M\circ N)_{j,k}=M_{j,k}\cdot N_{j,k}$ for all $j,k=1,\dots,n.$ Furthermore, the partial
ordering $M\succeq N$ of self-adjoint matrices means that $M-N$ is positive semidefinite.
We denote by $\diag M=(M_{1,1},\dots,M_{n,n})^T$ the vector of diagonal entries of $M$ whenever $M\in \C^{n\times n}$
and by $\overline{M}$ the matrix with entries $(\overline{M})_{j,k}=\overline{M_{j,k}}$.
Finally, $e^n=(1,\dots,1)^T\in\R^n$ stands for the vector of ones and $E_n=e^n(e^n)^T\in\R^{n\times n}$ is a matrix with all entries equal to one.

Using this notation, the main result of this paper then reads as follows.
\begin{thm}\label{thm:main1} Let $M\in\C^{n\times n}$ be a self-adjoint positive semidefinite matrix. Then
$$
M\circ \overline M\succeq \frac{1}{n} (\diag M)(\diag M)^T.
$$
\end{thm}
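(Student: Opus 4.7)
The plan is to factor $M$ as a Gram matrix, realize $M\circ\overline M$ as a Gram matrix of rank-one Hermitian matrices under the Frobenius inner product, and then exploit a natural rank constraint via Cauchy--Schwarz.

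Since $M$ is self-adjoint positive semidefinite, I would write $M=B^*B$ for some $B\in\C^{N\times n}$ (with $N\ge\mathrm{rank}(M)$), whose columns $b_1,\dots,b_n$ then satisfy $M_{j,k}=b_j^*b_k$ and $M_{j,j}=\|b_j\|^2$. To express the Hadamard square in an inner-product form I would introduce the rank-one Hermitian positive semidefinite matrices $u_j:=b_jb_j^*\in\C^{N\times N}$ and observe that
$$
\langle u_j,u_k\rangle_F := \mathrm{tr}(u_j u_k)=|b_j^*b_k|^2=|M_{j,k}|^2=(M\circ\overline M)_{j,k},\qquad \mathrm{tr}(u_j)=M_{j,j}.
$$

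Next I would reduce the matrix inequality to a scalar one. Since both matrices in the statement are real and symmetric, it suffices to verify, for every $c\in\R^n$, that $c^T(M\circ\overline M)c \ge \frac{1}{n}\bigl(c^T\diag M\bigr)^2$. Setting $A:=\sum_{j=1}^n c_j u_j = B\,\mathrm{diag}(c)\,B^*$, bilinearity of the Frobenius inner product turns the left-hand side into $\|A\|_F^2$ and the right-hand side into $\frac{1}{n}(\mathrm{tr}\,A)^2$. The theorem therefore reduces to the purely spectral inequality
$$
(\mathrm{tr}\,A)^2 \le n\cdot\|A\|_F^2.
$$

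The key observation -- and essentially the only step I expect to require an idea -- is that although $A$ lives in the potentially large space $\C^{N\times N}$, its rank is at most $n$, simply because $B\in\C^{N\times n}$ has only $n$ columns. Since $c$ is real, $A$ is Hermitian, so its eigenvalues $\mu_1,\dots,\mu_N$ are real with at most $n$ of them nonzero, and Cauchy--Schwarz applied to the nonzero eigenvalues yields
$$
(\mathrm{tr}\,A)^2=\Bigl(\sum_{\mu_i\neq 0}\mu_i\Bigr)^2\le \mathrm{rank}(A)\sum_{i=1}^N \mu_i^2 \le n\,\|A\|_F^2,
$$
which is exactly the required bound. The constant $1/n$ thus arises naturally as the reciprocal of the rank bound on $A$, and the rest of the argument is bookkeeping. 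No extra analytic input on $M$ beyond positive semidefiniteness should be needed.
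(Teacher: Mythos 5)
Your proof is correct, and it takes a genuinely different route from the paper's. The paper (in Theorems \ref{thm:ref1} and \ref{thm:ref2}, attributed to the referee) factors $M=AA^*$, decomposes $M\circ\overline M$ as a sum $\sum_{j,k}(A^j\circ \overline{A^k})(A^j\circ \overline{A^k})^*$ over pairs of columns, drops the off-diagonal terms to get $M\circ\overline M\succeq (A\circ\overline A)(A\circ\overline A)^*$, and then proves $CC^*\succeq \frac1n ww^*$ (where $w=Ce$) by applying the $\ell_1$--$\ell_2$ Cauchy--Schwarz inequality componentwise in $\C^n$. You instead realize $M\circ\overline M$ as the Gram matrix of the rank-one Hermitian matrices $u_j=b_jb_j^*$ under the Frobenius inner product, reduce the claim to $(\mathrm{tr}\,A)^2\le n\,\|A\|_F^2$ for $A=B\,\mathrm{diag}(c)\,B^*$, and prove that by Cauchy--Schwarz on the (at most $n$) nonzero eigenvalues. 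The underlying mechanism is the same -- a Cauchy--Schwarz that costs a factor $n$ coming from the ambient dimension -- but the reductions to it differ: the paper discards the off-diagonal entries of the $u_j$'s (equivalently, passes to $(A\circ\overline A)(A\circ\overline A)^*$), while you keep the full operators and invoke a rank bound. Your version is slightly more compact and conceptually transparent; the paper's version yields as a byproduct the stronger intermediate inequalities $M\circ N\succeq (A\circ B)(A\circ B)^*\succeq \frac1n ww^*$, valid for arbitrary PSD $N=BB^*$, not only $N=\overline M$. One small simplification to your write-up: since one may always take the Gram factor square, $B\in\C^{n\times n}$, the rank argument is not strictly needed -- $A$ is then an $n\times n$ Hermitian matrix and $(\mathrm{tr}\,A)^2\le n\|A\|_F^2$ is just Cauchy--Schwarz on its $n$ eigenvalues.
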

Conjecture \ref{con:Erich} will be resolved by a special case of Theorem \ref{thm:main1}, which we formulate as a corollary.
By restricting to real positive semidefinite matrices with ones on the diagonal, it gives a uniform lower bound for the class of real correlation matrices.
\begin{cor}\label{cor:2} Let $M\in\R^{n\times n}$ be symmetric positive semidefinite with $M_{j,j}=1$ for all $j=1,\dots,n$. Then
$$
M\circ M\succeq \frac{1}{n} E_n.
$$
\end{cor}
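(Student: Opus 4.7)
The plan is to obtain Corollary \ref{cor:2} as an immediate specialization of Theorem \ref{thm:main1}, since the two hypotheses in the corollary (real entries, unit diagonal) collapse the right-hand side of the main theorem to exactly $\frac{1}{n}E_n$.

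Concretely, I would proceed in two small observations. First, a real symmetric matrix is self-adjoint, and moreover satisfies $\overline{M}=M$; in particular it is positive semidefinite in the complex sense iff it is positive semidefinite in the real sense (as noted in the paragraph recalling definitions from \cite{HornJohnson}). Therefore Theorem \ref{thm:main1} applies and its left-hand side $M\circ\overline{M}$ is simply $M\circ M$. Second, the assumption $M_{j,j}=1$ for all $j$ means $\diag M = e^n$, so the outer product on the right-hand side of Theorem \ref{thm:main1} becomes
\[
\frac{1}{n}(\diag M)(\diag M)^T=\frac{1}{n}e^n(e^n)^T=\frac{1}{n}E_n.
\]
Combining these two identifications with the conclusion of Theorem \ref{thm:main1} gives $M\circ M\succeq \frac{1}{n}E_n$, which is exactly the statement of the corollary.

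There is essentially no obstacle here: the entire content of the corollary is packed into Theorem \ref{thm:main1}, and the passage from the theorem to the corollary is just a matter of recognizing that in the real symmetric case with unit diagonal the general bound degenerates to the uniform bound $\frac{1}{n}E_n$. The only thing one has to be slightly careful about is the convention for positive semidefiniteness over $\R$ versus $\C$, but this is already settled by the references cited in the excerpt just before the statement of Theorem \ref{thm:main1}.
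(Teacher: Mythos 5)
Your proposal is correct and matches the paper's (implicit) derivation of the corollary: the paper likewise obtains it by specializing Theorem \ref{thm:main1} to the real symmetric case, where $\overline{M}=M$ gives $M\circ\overline{M}=M\circ M$ and unit diagonal gives $(\diag M)(\diag M)^T=E_n$.
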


We prove Theorem \ref{thm:main1} in Section \ref{Sec:2}.
Let us note that it actually resembles the original work of Schur \cite{Schur}.
This section also includes 
several generalizations of Theorem \ref{thm:main1}.
Sections \ref{Sec:3} and \ref{Sec:4} discuss the connections
with Bochner's theorem and with covariance matrices of multivariate $\chi^2$ random variables. Finally, Section \ref{Sec:5} gives an account on the
numerical integration, which was the original motivation of E. Novak, and shows how Corollary \ref{cor:2} implies Conjecture \ref{con:Erich}.

\section{Proof of Theorem \ref{thm:main1} and its variants}\label{Sec:2}

We begin with the following theorem, which leads us to a proof of Theorem \ref{thm:main1}.

\begin{thm}\label{thm:ref1} Let $M\in\C^{n\times n}$ be positive semidefinite. Let $M=AA^*$ and let $w=Ae$ be the vector of row sums of $A$. Then
$$
M\succeq \frac{1}{n} ww^*.
$$
\end{thm}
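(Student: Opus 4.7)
The plan is to reduce the claim to an elementary fact about congruence by the matrix $A$ coming from the factorization $M = AA^*$. Since $w = Ae$, we can write
\[
ww^* = (Ae)(Ae)^* = A\,ee^T A^* = A E_n A^*,
\]
and therefore
\[
M - \frac{1}{n} ww^* = AA^* - \frac{1}{n} A E_n A^* = A\Bigl(I - \frac{1}{n} E_n\Bigr) A^*.
\]
So the whole problem reduces to understanding the matrix $P := I - \frac{1}{n} E_n$.

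Next I would observe that $P$ is precisely the orthogonal projection onto the hyperplane $\{v\in\C^n : \langle v, e\rangle = 0\}$: indeed, $\frac{1}{n} E_n = \frac{1}{n} ee^T$ is the rank-one orthogonal projection onto the span of $e$, so its complement $P$ is the projection onto $e^\perp$. In particular $P = P^* = P^2$, and thus $P \succeq 0$.

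The conclusion then follows from the standard fact that congruence preserves positive semidefiniteness: if $P\succeq 0$ and $B\in\C^{n\times n}$ is arbitrary, then $BPB^* \succeq 0$, since one may write $P = P^{1/2}(P^{1/2})^*$ and hence $BPB^* = (BP^{1/2})(BP^{1/2})^*$. Applying this with $B = A$ gives $A P A^* \succeq 0$, which is exactly $M \succeq \tfrac{1}{n} ww^*$.

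I do not expect any serious obstacle here — the entire argument hinges on spotting the identity $ww^* = A E_n A^*$ and then recognizing $I - \tfrac{1}{n} E_n$ as a projection. The only point that deserves a line of care is that the factorization $M = AA^*$ is always available for a positive semidefinite $M$ (for instance, via $A = M^{1/2}$, or via a Cholesky-type decomposition), so that the statement of the theorem is not vacuous. Note also that the resulting bound is sharp in the direction $e$: taking $c = e$ in the quadratic form yields equality, since $P e = 0$.
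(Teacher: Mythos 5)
Your argument is correct and proves the theorem, but it packages the key inequality differently from the paper. The paper expands the two quadratic forms $c^*Mc = \|A^*c\|_2^2$ and $\tfrac{1}{n}c^*ww^*c = \tfrac{1}{n}|\langle A^*c, e\rangle|^2$ and then applies the triangle inequality together with the $\ell_1$--$\ell_2$ norm inequality (Cauchy--Schwarz) to conclude. You instead factor the difference as $M - \tfrac{1}{n}ww^* = A\bigl(I - \tfrac{1}{n}E_n\bigr)A^*$, recognize $I - \tfrac{1}{n}E_n$ as the orthogonal projection onto $e^\perp$, and invoke closure of positive semidefiniteness under congruence. Both arguments ultimately rest on the same elementary fact $E_n \preceq nI$, but your factorization makes the structure transparent and avoids any coordinate computation; it also immediately shows that the inequality degenerates to equality precisely on the kernel of $AP A^*$, which is useful to keep in view. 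The paper's approach, on the other hand, stays closer to the form in which the result is used later (as an estimate on quadratic forms) and needs no matrix square root or projection machinery, only the Cauchy--Schwarz inequality.

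One small error in your closing remark: you claim equality holds at $c = e$ ``since $Pe = 0$.'' But the quadratic form at $c = e$ is $(A^*e)^*P(A^*e)$, not $e^*Pe$, so the relevant vector is $A^*e$, which need not lie in the span of $e$. For example, with $n=2$ and $A = \begin{pmatrix}1&0\\0&0\end{pmatrix}$ one has $A^*e = (1,0)^T$ and $(A^*e)^*P(A^*e) = \tfrac12 \neq 0$. The bound is still attained (here at $c=(0,1)^T$), but not in the direction $e$ in general; it is attained exactly when $A^*c$ is a scalar multiple of $e$.
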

\begin{proof}
We need to show that
\begin{equation}\label{eq:PD1}
c^*Mc\ge \frac{1}{n}c^*ww^*c
\end{equation}
for every $c\in\C^n$. We start with a reformulation of the left-hand side of \eqref{eq:PD1}
and compute for arbitrary $c\in\C^n$
\begin{align}\label{eq:PD2}
c^*Mc&=c^*AA^*c=\|A^*c\|_2^2=\sum_{k=1}^n |(A^*c)_k|^2
=\sum_{k=1}^n\Bigl|\sum_{j=1}^n\overline{A_{j,k}}c_j\Bigr|^2.
\end{align}
Similarly, we reformulate the right-hand side of \eqref{eq:PD1}
\begin{equation}\label{eq:PDX}
\frac{1}{n}c^*ww^*c=\frac{1}{n}\sum_{j=1}^n \overline{c_j}w_j\sum_{k=1}^n c_k\overline{w_k}=\frac{1}{n}\Bigl|\sum_{j=1}^nc_j\overline{w_j}\Bigr|^2.
\end{equation}
Furthermore, by using triangle inequality and the (sharp) inequality
between $\ell_2^n$ and $\ell_1^n$ norm \cite[Problem 5.4.P3]{HornJohnson} (or just the Cauchy-Schwarz inequality), we obtain
\begin{align}
\notag\Bigl|\sum_{j=1}^n c_j \overline{w_{j}}\Bigr|&=\Bigl| \sum_{j=1}^nc_j\sum_{k=1}^n \overline{A_{j,k}}\Bigr|
\le \sum_{k=1}^n \Bigl|\sum_{j=1}^nc_j \overline{A_{j,k}}\Bigr|\\
\label{eq:PD3}&\le \sqrt{n} \Bigl(\sum_{k=1}^n \Bigl|\sum_{j=1}^nc_j\overline{A_{j,k}}\Bigr|^2\Bigr)^{1/2}.
\end{align}
Taking the square of \eqref{eq:PD3} and combining it with \eqref{eq:PD2} and \eqref{eq:PDX} finishes the proof.
\end{proof}

\begin{thm}\label{thm:ref2} Let $M,N\in\C^{n\times n}$ be positive semidefinite with $M=AA^*$ and $N=BB^*.$
Let $w=(A\circ B)e$ be the vector of row sums of $A\circ B$. Then
$$
M\circ N\succeq (A\circ B)(A\circ B)^*\succeq\frac{1}{n}ww^*.
$$
\end{thm}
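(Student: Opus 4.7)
\medskip

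The plan is to handle the two inequalities separately, since the second one is essentially a direct application of Theorem \ref{thm:ref1}.

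For the second inequality $(A\circ B)(A\circ B)^*\succeq \frac{1}{n}ww^*$, I would apply Theorem \ref{thm:ref1} with the factor $C=A\circ B$ in place of $A$. The matrix $CC^*=(A\circ B)(A\circ B)^*$ is positive semidefinite by construction, its row-sum vector is exactly $Ce=(A\circ B)e=w$, and Theorem \ref{thm:ref1} immediately yields the desired bound $CC^*\succeq \frac{1}{n}ww^*$.

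For the first inequality $M\circ N\succeq (A\circ B)(A\circ B)^*$, the plan is to use the standard decomposition behind Schur's product theorem. Let $a_\ell,b_\ell\in\C^n$ denote the $\ell$-th columns of $A$ and $B$, so that
$$
M=AA^*=\sum_{\ell=1}^n a_\ell a_\ell^*,\qquad N=BB^*=\sum_{m=1}^n b_m b_m^*.
$$
Taking the Hadamard product entry-wise, and using the elementary identity $(xx^*)\circ(yy^*)=(x\circ y)(x\circ y)^*$ for vectors $x,y\in\C^n$, one obtains
$$
M\circ N=\sum_{\ell,m=1}^n (a_\ell a_\ell^*)\circ(b_m b_m^*)=\sum_{\ell,m=1}^n (a_\ell\circ b_m)(a_\ell\circ b_m)^*.
$$
On the other hand, by expanding matrix multiplication as a sum of outer products of columns,
$$
(A\circ B)(A\circ B)^*=\sum_{\ell=1}^n (a_\ell\circ b_\ell)(a_\ell\circ b_\ell)^*,
$$
which is exactly the diagonal part $\ell=m$ of the previous sum. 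Subtracting yields
$$
M\circ N-(A\circ B)(A\circ B)^*=\sum_{\ell\ne m}(a_\ell\circ b_m)(a_\ell\circ b_m)^*,
$$
a sum of rank-one positive semidefinite matrices, hence positive semidefinite. Chaining this with the bound from Theorem \ref{thm:ref1} finishes the proof.

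I do not expect a serious obstacle here: the only real step is recognizing that the gap between $M\circ N$ and $(A\circ B)(A\circ B)^*$ is precisely the off-diagonal part $\ell\ne m$ of the Schur-product decomposition, which is manifestly positive semidefinite. Everything else is a direct invocation of Theorem \ref{thm:ref1}.
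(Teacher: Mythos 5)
Your proof is correct and follows essentially the same route as the paper: decompose $M\circ N$ into the double sum $\sum_{\ell,m}(a_\ell\circ b_m)(a_\ell\circ b_m)^*$ of rank-one positive semidefinite matrices, observe that $(A\circ B)(A\circ B)^*$ is exactly the diagonal $\ell=m$ part, and then invoke Theorem~\ref{thm:ref1} on $(A\circ B)(A\circ B)^*$ for the second inequality. The only (immaterial) difference is that you explicitly isolate the off-diagonal remainder, whereas the paper simply drops the $\ell\ne m$ terms.
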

\begin{proof} We denote by $A^j$ and $B^j$ the columns of $A$ and $B$, respectively. Then
\begin{align*}
M\circ N&=(AA^*)\circ (BB^*)=\Bigl(\sum_{j=1}^n A^j (A^j)^*\Bigr)\circ\Bigl(\sum_{k=1}^n B^k (B^k)^*\Bigr)\\
&=\sum_{j,k=1}^n (A^j (A^j)^*)\circ(B^k (B^k)^*)=\sum_{j,k=1}^n (A^j\circ B^k)(A^j\circ B^k)^*\\
&\succeq \sum_{j=1}^n (A^j\circ B^j)(A^j\circ B^j)^*=(A\circ B)(A\circ B)^*
\end{align*}
and the theorem follows by applying Theorem \ref{thm:ref1} to the matrix $(A\circ B)(A\circ B)^*$.
\end{proof}
Theorem \ref{thm:main1} now follows from Theorem \ref{thm:ref2} if we choose $N=\overline{M}$ and $B=\overline{A}.$ Indeed, the row sums of $A\circ \overline{A}$
are the diagonal entries of $M$. Furthermore, choosing $N=M$ and $B=A$ in Theorem \ref{thm:ref2} leads to
$$
M\circ M\succeq \frac{1}{n}ww^*,
$$
where $w=(A\circ A)e$ is the vector of row sums of $A\circ A$.

\section{Bochner's theorem}\label{Sec:3}

If $\mu$ is a finite Borel measure on $\R^d$, then its Fourier transform is given by
$$
g(\xi)=\hat \mu(\xi)=\int_{\R^d} e^{-2\pi i \xi\cdot x}d\mu(x),\quad\xi\in\R^d,
$$
where $\xi\cdot x=\langle \xi,x\rangle$ is the inner product of $x\in\R^d$ and $\xi\in\R^d.$
The classical Bochner theorem (actually its easy part), see \cite{Bochner1, Bochner2}, states that the matrix
$$
(g(\xi_j-\xi_k))_{j,k=1}^n
$$
is positive semidefinite for every choice of $\xi_1,\dots,\xi_n\in\R^d$.

The proof follows by a simple calculation, as we have for every $c\in\C^d$
\begin{align}
\notag\sum_{j,k=1}^n c_j\overline{c_k} g(\xi_j-\xi_k)&=\sum_{j,k=1}^n c_j\overline{c_k} \int_{\R^d} e^{-2\pi i (\xi_j-\xi_k)\cdot x}d\mu(x)\\
\label{eq:Bochner1}&=\int_{\R^d} \sum_{j,k=1}^n c_j\overline{c_k}  e^{-2\pi i \xi_j\cdot x} e^{2\pi i \xi_k\cdot x}d\mu(x)\\
\notag&=\int_{\R^d} \Bigl|\sum_{j=1}^n c_j  e^{-2\pi i \xi_j\cdot x}\Bigr|^2d\mu(x)\ge 0.
\end{align}
We refer to \cite{Stewart} for a classical overview of positive definite functions.
Theorem \ref{thm:main1} leads to the following modification of one part of Bochner's Theorem.

\begin{thm}\label{thm:Bochner1} Let $\mu$ be a finite Borel measure on $\R^d$ and let $g$ be its Fourier transform. Then
$$
(|g(\xi_j-\xi_k)|^2)_{j,k=1}^n\succeq \frac{g^2(0)}{n}\cdot E_n
$$
for every choice of $\xi_1,\dots,\xi_n\in\R^d.$
\end{thm}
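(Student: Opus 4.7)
The plan is to recognize this as an immediate corollary of Theorem \ref{thm:main1} applied to the Bochner matrix of $g$. Concretely, I would set
$$
M=(g(\xi_j-\xi_k))_{j,k=1}^n\in\C^{n\times n}.
$$
The classical (easy) direction of Bochner's theorem, recalled in \eqref{eq:Bochner1}, shows that $M$ is positive semidefinite, and in particular self-adjoint, so the hypothesis of Theorem \ref{thm:main1} is met.

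Next I would read off the two objects appearing in Theorem \ref{thm:main1}. The Hadamard square $M\circ\overline M$ has entries
$$
(M\circ\overline M)_{j,k}=g(\xi_j-\xi_k)\cdot\overline{g(\xi_j-\xi_k)}=|g(\xi_j-\xi_k)|^2,
$$
which is precisely the matrix on the left-hand side of the conclusion. For the diagonal, since $g(0)=\mu(\R^d)\in\R$, one has $M_{j,j}=g(0)$ for every $j$, so $\diag M=g(0)\cdot e^n$ and therefore
$$
(\diag M)(\diag M)^T=g(0)^2\cdot e^n(e^n)^T=g^2(0)\cdot E_n.
$$

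Plugging these two identifications into the inequality $M\circ\overline M\succeq \frac{1}{n}(\diag M)(\diag M)^T$ given by Theorem \ref{thm:main1} yields exactly the claimed bound. There is no real obstacle here: all the work is already packed into Theorem \ref{thm:main1}, and the only substantive observation needed is that the Bochner matrix of a finite measure is a self-adjoint positive semidefinite matrix whose diagonal is constantly equal to $g(0)$, so that $(\diag M)(\diag M)^T$ collapses to a scalar multiple of $E_n$.
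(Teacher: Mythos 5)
Your proposal is correct and matches the paper's own proof: both apply Theorem~\ref{thm:main1} to the Bochner matrix $M=(g(\xi_j-\xi_k))_{j,k}$, using \eqref{eq:Bochner1} for positive semidefiniteness and the fact that the diagonal is constantly $g(0)=\mu(\R^d)$ so that $(\diag M)(\diag M)^T=g^2(0)E_n$. You merely spell out the identification of $M\circ\overline{M}$ with $(|g(\xi_j-\xi_k)|^2)_{j,k}$ a bit more explicitly than the paper does.
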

\begin{proof}
By the easy part of Bochner's theorem, see \eqref{eq:Bochner1}, we know that the matrix $M=(M_{j,k})_{j,k=1}^n$ with $M_{j,k}=g(\xi_j-\xi_k)$
is positive semidefinite. Furthermore, $M_{j,j}=g(0)=\int_{\R^d}1d\mu(x)$ is real for every $j=1,\dots,n$.
The result then follows by invoking Theorem \ref{thm:main1}.
\end{proof}

Theorem \ref{thm:Bochner1} can be generalized to other locally compact abelian groups by just applying the corresponding version of Bochner's theorem.
In this way, one can prove for example the following version for the torus $\T$; see Conjecture 3 in \cite{HV}.
\begin{thm} Let $\alpha=(\alpha_j)_{j\in\Z}$ be a non-negative summable sequence, i.e., $\alpha_j\ge 0$ for every $j\in\Z$ and $\alpha\in\ell_1(\Z)$.
Let $g(x)=\sum_{j\in\Z}\alpha_j e^{ijx}$ for every $x\in\T$. Then
$$
(|g(\xi_j-\xi_k)|^2)_{j,k=1}^n\succeq \frac{g^2(0)}{n}\cdot E_n
$$
for every choice of $\xi_1,\dots,\xi_n\in\T.$
\end{thm}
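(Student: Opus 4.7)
The plan is to mirror the proof of Theorem \ref{thm:Bochner1} almost verbatim, replacing the use of Bochner's theorem on $\R^d$ with its direct analogue on the torus. Concretely, I want to set $M=(g(\xi_j-\xi_k))_{j,k=1}^n$, verify that $M$ is self-adjoint and positive semidefinite, and then apply Theorem \ref{thm:main1}, observing that the diagonal of $M$ is the constant vector $g(0)\cdot e^n$.

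First I would check self-adjointness. Since $\alpha_j\ge 0$ is real, one has $g(-x)=\sum_{j\in\Z}\alpha_j e^{-ijx}=\overline{g(x)}$, and therefore $M_{k,j}=g(\xi_k-\xi_j)=\overline{g(\xi_j-\xi_k)}=\overline{M_{j,k}}$, i.e.\ $M=M^*$. In particular the diagonal entries $M_{j,j}=g(0)=\sum_{j\in\Z}\alpha_j$ are real (and nonnegative).

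Next I would establish positive semidefiniteness of $M$ by a direct calculation that does not require a separate invocation of Bochner's theorem. For arbitrary $c\in\C^n$, the absolute summability $\alpha\in\ell_1(\Z)$ justifies interchanging the finite sum over $j,k$ with the series in $m$:
\begin{align*}
\sum_{j,k=1}^n c_j\overline{c_k}\,g(\xi_j-\xi_k)
&=\sum_{m\in\Z}\alpha_m\sum_{j,k=1}^n c_j\overline{c_k}\,e^{im\xi_j}e^{-im\xi_k}\\
&=\sum_{m\in\Z}\alpha_m\Bigl|\sum_{j=1}^n c_j e^{im\xi_j}\Bigr|^2\ge 0,
\end{align*}
which is precisely the torus analogue of \eqref{eq:Bochner1}.

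Finally, Theorem \ref{thm:main1} applied to $M$ gives $M\circ\overline{M}\succeq \tfrac{1}{n}(\diag M)(\diag M)^T$. The entries of $M\circ\overline{M}$ are $|g(\xi_j-\xi_k)|^2$, while $\diag M=g(0)\cdot e^n$ so that $(\diag M)(\diag M)^T=g(0)^2 E_n$, yielding the claimed bound. I do not expect a real obstacle here: the only subtlety is ensuring the interchange of sums in the positivity calculation, which is handled by the hypothesis $\alpha\in\ell_1(\Z)$, and ensuring the reality of $g(0)$ so that $g^2(0)$ on the right-hand side makes sense as the square of a real number, which follows from $\alpha_j\ge 0$.
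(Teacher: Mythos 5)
Your proof is correct and follows exactly the route the paper indicates: verify that $M=(g(\xi_j-\xi_k))_{j,k}$ is self-adjoint and positive semidefinite via the torus analogue of Bochner's theorem, note that the diagonal is $g(0)\cdot e^n$, and apply Theorem \ref{thm:main1}. The paper leaves the torus Bochner step implicit (``by just applying the corresponding version of Bochner's theorem''); you spell it out, which is the right thing to do but not a different approach.
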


Theorem \ref{thm:Bochner1} has an interesting reformulation in the language of independent random variables.
\begin{thm}\label{thm:Bochner:2} Let $\mu$ be a Borel probability measure on $\R^d$.
Let $\omega_1$ and $\omega_2$ be two independent random vectors, both distributed with respect to $\mu$.
Then
\begin{equation}\label{eq:Bochner:2}
\E \Bigl|\sum_{j=1}^n c_j e^{-2\pi ix_j\cdot(\omega_1-\omega_2)}\Bigr|^2\ge \frac{1}{n}\Bigl|\sum_{j=1}^n c_j\Bigr|^2
\end{equation}
for every choice of $c\in\C^n$ and $x_1,\dots,x_n\in\R^d.$
\end{thm}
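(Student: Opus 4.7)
The plan is to reduce the inequality directly to Theorem \ref{thm:Bochner1} by expanding the squared modulus, taking expectation, and exploiting the independence of $\omega_1$ and $\omega_2$ to produce $|g(x_j-x_k)|^2$ inside the double sum.

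First I would expand $|z|^2 = z\bar z$ to write
$$\Bigl|\sum_{j=1}^n c_j e^{-2\pi i x_j \cdot (\omega_1 - \omega_2)}\Bigr|^2 = \sum_{j,k=1}^n c_j \overline{c_k}\, e^{-2\pi i (x_j - x_k)\cdot (\omega_1 - \omega_2)},$$
then take the expectation termwise. Independence of $\omega_1$ and $\omega_2$ factorises each summand as
$$\E\, e^{-2\pi i (x_j - x_k)\cdot \omega_1}\cdot \E\, e^{2\pi i (x_j - x_k)\cdot \omega_2} = g(x_j - x_k)\cdot \overline{g(x_j - x_k)} = |g(x_j - x_k)|^2,$$
where in the second factor I use $\overline{g(\xi)}=g(-\xi)$, which holds because $\mu$ is a (positive) real measure. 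Consequently, the left-hand side of \eqref{eq:Bochner:2} is exactly $\sum_{j,k} c_j \overline{c_k}\,|g(x_j - x_k)|^2$.

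Next I would apply Theorem \ref{thm:Bochner1} with the points $x_1,\dots,x_n$ playing the role of the frequencies $\xi_1,\dots,\xi_n$. Since $\mu$ is a probability measure we have $g(0)=\mu(\R^d)=1$, so that theorem yields
$$\sum_{j,k=1}^n c_j \overline{c_k}\,|g(x_j - x_k)|^2 \;\ge\; \frac{1}{n}\sum_{j,k=1}^n c_j\overline{c_k} \;=\; \frac{1}{n}\Bigl|\sum_{j=1}^n c_j\Bigr|^2,$$
which is precisely \eqref{eq:Bochner:2}. I do not expect a genuine obstacle: the entire content is already contained in Theorem \ref{thm:Bochner1}, and what remains is essentially the symmetrisation identity that turns the joint characteristic function of the pair $(\omega_1,\omega_2)$ into the squared modulus of the Fourier transform of $\mu$. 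The only point worth checking carefully is the sign bookkeeping in the second factor and the identity $\overline{g(\xi)}=g(-\xi)$ used to recognise it as $\overline{g(x_j-x_k)}$.
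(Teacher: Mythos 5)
Your proof is correct and takes essentially the same approach as the paper: expand the squared modulus into a double sum, use independence to factor the expectation into $g(x_j-x_k)\,\overline{g(x_j-x_k)} = |g(x_j-x_k)|^2$, and then invoke Theorem~\ref{thm:Bochner1} together with $g(0)=1$. Your explicit remark on $\overline{g(\xi)}=g(-\xi)$ for a positive real measure is a detail the paper leaves implicit but is exactly the sign bookkeeping that needs checking.
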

\begin{proof}
We denote again by $g$ the Fourier transform of $\mu$.
The proof follows from Theorem \ref{thm:Bochner1} and the following direct calculation
\begin{align*}
\E \Bigl|\sum_{j=1}^n c_j e^{-2\pi ix_j\cdot(\omega_1-\omega_2)}\Bigr|^2
&=\E \sum_{j,k=1}^n c_j \overline{c_k}e^{-2\pi i(x_j-x_k)\cdot(\omega_1-\omega_2)}\\
&=\sum_{j,k=1}^n c_j \overline{c_k}\,[\E\, e^{-2\pi i(x_j-x_k)\cdot \omega_1}]\cdot [\E\, e^{2\pi i(x_j-x_k)\cdot \omega_2}]\\
&=\sum_{j,k=1}^n c_j\overline{c_k} |g(x_j-x_k)|^2.
\end{align*}
\end{proof}

\begin{rem}
If $d=1$ and $\omega_1,\omega_2$ are independent and identically distributed standard normal random variables, then $\omega_1-\omega_2$ is again a normal random variable.
After rescaling, \eqref{eq:Bochner:2} gives for every $y_1,\dots,y_n\in\R$ and every $c\in\C^n$
\begin{equation}\label{eq:square:3}
{\mathbb E}\Bigl|\sum_{j=1}^n c_j e^{iy_j \omega}\Bigr|^2\ge \frac{1}{n}\Bigl|\sum_{j=1}^n c_j\Bigr|^2.
\end{equation}
\end{rem}

\section{Covariance matrices}\label{Sec:4}

Corollary \ref{cor:2} has an interesting consequence for covariance matrices of multivariate $\chi^2$-distributions (with one degree of freedom).
Let $M\in\R^{n\times n}$ be a positive semidefinite matrix with ones on the diagonal
and let $X_1,\dots,X_n$ be Gaussian random variables with zero mean, unit variance and $\Cov(X_j,X_k)=M_{j,k}$ for all $j,k=1,\dots,n.$
Using Wick's theorem, see \cite{Iss1, Iss2, Wick},
we observe that
$$
\Cov(X_j^2,X_k^2)=\E[X_j^2X_k^2]-\E[X_j^2]\cdot \E[X_k^2]=2(\E[X_jX_k])^2=2M_{j,k}^2.
$$
Applying 
Corollary \ref{cor:2} we obtain
$$
(\Cov(X_j^2,X_k^2))_{j,k=1}^n=(2M_{j,k}^2)_{j,k=1}^n\succeq \frac{2}{n}\cdot E_n.
$$
Thus we have proven 
\begin{thm}\label{thm:chi1}
Let $X=(X_1,\dots,X_n)$ be a vector of standard normal random variables. Then
$$
(\Cov(X_j^2,X_k^2))_{j,k=1}^n\succeq \frac{2}{n}\cdot E_n.
$$
\end{thm}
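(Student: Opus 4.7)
The plan is to identify the covariance matrix of the Gaussian vector $X$ and feed it into Corollary \ref{cor:2}, using Isserlis'/Wick's formula to translate between second moments of $X_j$ and fourth moments involving $X_j^2$.

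First I would set $M_{j,k}=\E[X_jX_k]$. Since each $X_j$ is standard normal, $M$ is a symmetric positive semidefinite matrix with $M_{j,j}=1$ for all $j$, so $M$ meets the hypotheses of Corollary~\ref{cor:2}. Hence
$$M\circ M \succeq \frac{1}{n}\,E_n.$$

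Next I would compute $\Cov(X_j^2,X_k^2)$ entrywise. The joint distribution of $(X_j,X_k)$ is centered Gaussian, so Isserlis' theorem gives $\E[X_j^2X_k^2]=\E[X_j^2]\,\E[X_k^2]+2(\E[X_jX_k])^2=1+2M_{j,k}^2$, and therefore
$$\Cov(X_j^2,X_k^2)=\E[X_j^2X_k^2]-\E[X_j^2]\E[X_k^2]=2M_{j,k}^2.$$
In matrix form this reads $(\Cov(X_j^2,X_k^2))_{j,k=1}^n = 2\,(M\circ M)$.

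Combining the two displays and multiplying the inequality by the scalar $2>0$, which preserves the Löwner order, yields
$$(\Cov(X_j^2,X_k^2))_{j,k=1}^n = 2\,(M\circ M) \succeq \frac{2}{n}\,E_n,$$
which is exactly the claim. There is no real obstacle here: the entire content is contained in Corollary~\ref{cor:2}, and the only ingredient beyond it is the classical Wick identity for the fourth mixed moment of a bivariate centered Gaussian, which is standard. The one point worth double-checking is that Corollary~\ref{cor:2} is stated for real symmetric matrices with unit diagonal, which is precisely the structure of the Gaussian covariance matrix $M$, so no adaptation is needed.
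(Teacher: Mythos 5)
Your proof is correct and follows essentially the same route as the paper: set $M_{j,k}=\E[X_jX_k]$, use Wick's/Isserlis' formula to get $\Cov(X_j^2,X_k^2)=2M_{j,k}^2$, and apply Corollary~\ref{cor:2} to $M$. No gaps.
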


\section{Numerical integration}\label{Sec:5}

In this section, we summarize the approach of \cite{Erich}, where E. Novak studied how well the quadrature formulas
\begin{equation}\label{eq:1}
Q_n(f)=\sum_{i=1}^n c_if(x_i),\qquad c_i\in\R,\quad x_i\in[0,1]^d
\end{equation}
approximate the integral $\INT_d(f)=\int_{[0,1]^d}f(x)dx.$
Here $f$ belongs to a unit ball of a Hilbert space $F_d$, that is a $d$-fold tensor product of a space $F_1$, which is a three-dimensional Hilbert space
with an orthonormal basis given by the functions
$$
e_{1}(x)=1,\quad e_{2}(x)=\cos(2\pi x),\quad e_{3}(x)=\sin(2\pi x),\quad x\in[0,1].
$$
Hence $F_d$ is a $3^d$-dimensional Hilbert space.
The point evaluation $\delta_x:f\to f(x)$ may be written in the form
$$
f(x)=\langle f,\delta_x\rangle_{F_d}\quad \text{with}\quad
\delta_x(z)=\prod_{j=1}^d[1+\cos(2\pi (x_j-z_j))].
$$
In this way, $F_d$ becomes a reproducing kernel Hilbert space with the kernel
$$
K_d(x,y)=\langle\delta_x,\delta_y\rangle_{F_d}=\prod_{j=1}^d[1+\cos(2\pi(x_j-y_j))],\quad x,y\in[0,1]^d.
$$
This allows us to compute the worst-case error of $Q_n$ given by \eqref{eq:1} as
\begin{align*}
e^{\rm wor}(Q_n)^2&=\sup_{||f||_{F_d}\le 1}|\INT_d f-Q_n(f)|^2
=\biggl\|1-\sum_{j=1}^n c_j \delta_{x_j}\biggr\|^2_{F_d}\\
&=1-2\sum_{j=1}^n c_j+\sum_{j,k=1}^nc_jc_k K_d(x_j,x_k).
\end{align*}
If all $c_j$'s are positive, we can use the positivity of $K_d$ and obtain
$$
e^{\rm wor}(Q_n)^2\ge 1-2\sum_{j=1}^nc_j+\sum_{j=1}^n c_j^22^d.
$$
For the optimal choice $c_j=2^{-d}$ this becomes
\begin{equation}\label{eq:2}
e^{\rm wor}(Q_n)^2\ge \max(1-n2^{-d},0).
\end{equation}
This estimate shows the intractability of numerical integration on $F_d$ with quadrature formulas with positive weights since
for a fixed error the number $n$ of sample points needs to grow exponentially with the dimension $d$.

To estimate $e^{\rm wor}(Q_n)^2$ from below for quadrature rules with general weights $c$, we use the fact
that the projection of any $y\in F_d$ onto the ray generated by $x\in F_d$ is given by
$\displaystyle \frac{\langle y,x\rangle_{F_d} x}{\langle x,x\rangle_{F_d}}$ and its norm is equal to $\displaystyle \frac{|\langle y,x\rangle_{F_d}|}{\|x\|_{F_d}}$.
In this way we obtain
\begin{align}
\inf_{c_j,x_j} \biggl\|1-\sum_{j=1}^n c_j \delta_{x_j}\biggr\|^2_{F_d}
\notag&=\inf_{c_j,x_j}\inf_{\alpha\in \R} \biggl\|1-\alpha\sum_{j=1}^n c_j \delta_{x_j}\biggr\|^2_{F_d}\\
\label{eq:3}&=\inf_{c_j,x_j}\left\{\|1\|_{F_d}^2- \frac{\displaystyle\Bigl|\Bigl\langle 1,\sum_{j=1}^n c_j\delta_{x_j}\Bigr\rangle_{F_d}\Bigr|^2}
{\displaystyle\Bigl|\Bigl\langle \sum_{j=1}^n c_j\delta_{x_j},\sum_{j=1}^n c_j\delta_{x_j}\Bigr\rangle_{F_d}\Bigl|}\right\}\\
\notag&=1-\sup_{c_j,x_j}\frac{\displaystyle \biggl(\sum_{j=1}^nc_j\biggr)^2}
{\displaystyle \sum_{j,k=1}^n c_jc_k K_d(x_j,x_k)}.
\end{align}

Erich Novak conjectured that the estimate \eqref{eq:2} applies also for
quadrature formulas \eqref{eq:1} with general weights, which by \eqref{eq:3} is equivalent to Conjecture \ref{con:Erich}.

Finally, let us show how 
Corollary \ref{cor:2} implies that Conjecture \ref{con:Erich} is correct.
We define matrices $M_1,\dots,M_d$ by
$$
(M_i)_{j,k}=\cos\Bigl(\frac{x_{j,i}-x_{k,i}}{2}\Bigr),\quad i=1,\dots,d,\quad j,k=1,\dots,n.
$$
By Bochner's theorem, the matrices $M_i$ are all positive semidefinite and, by the Schur Product Theorem \cite[Theorem 7.5.3]{HornJohnson},
also their Hadamard product $M=M_1\circ\dots\circ M_d$ is positive semidefinite. Obviously, $M$ has all its diagonal elements equal to one.
Finally, 
Corollary \ref{cor:2} shows that the matrix $M\circ M-\frac{1}{n}{E_n}$ with entries
$$
(M\circ M)_{j,k}-\frac{1}{n}=\prod_{i=1}^d \cos^2\Bigl(\frac{x_{j,i}-x_{k,i}}{2}\Bigr)-\frac{1}{n}
=\prod_{i=1}^d\frac{1+\cos(x_{j,i}-x_{k,i})}{2}-\frac{1}{n}
$$
is also positive semidefinite.

Hence, the integration problem on $F_d$ is intractable even when we allow negative weights $c_j$ in the quadrature formula \eqref{eq:1}.

{\bf Acknowledgement:} The author thanks Dmitriy Bilyk (University of Minnesota), Aicke Hinrichs (JKU Linz) and Erich Novak (FSU Jena) for fruitful discussions
and the anonymous referee, who proposed Theorems \ref{thm:ref1} and \ref{thm:ref2} as generalizations of Theorem \ref{thm:main1}.

\thebibliography{99}
\bibitem{Bochner1} S. Bochner, Vorlesungen \"uber Fouriersche Integrale, Akademische Verlagsgesellschaft, Leipzig, 1932
\bibitem{Bochner2} S. Bochner, \emph{Monotone Funktionen, Stieltjessche Integrale und harmonische Analyse}, Math. Ann. 108 (1933), 378--410
\bibitem{HV} A. Hinrichs and J. Vyb\'\i ral, \emph{On positive positive-definite functions and Bochner's Theorem},
J. Complexity 27 (2011), no. 3--4, 264--272
\bibitem{Horn} R.A. Horn, \emph{The Hadamard product}, in: Proc. Sympos. Appl. Math., vol. 40, Amer. Math. Soc., Providence, 1990, pp. 87--169
\bibitem{HornJohnson} R.A. Horn and C.R. Johnson, Matrix Analysis, 2nd ed., Cambridge University Press, 2013
\bibitem{Iss1} L. Isserlis, \emph{On certain probable errors and correlation coefficients of multiple frequency distributions with skew regression},
Biometrika 11 (3) (1916), 185--190
\bibitem{Iss2} L. Isserlis, \emph{On a formula for the product-moment coefficient of any order of a normal frequency distribution
in any number of variables}, Biometrika 12(1/2) (1918), 134--139 
\bibitem{Erich} E. Novak, \emph{Intractability results for positive quadrature formulas and extremal problems for trigonometric polynomials}, J. Complexity 15 (1999), 299--316
\bibitem{NW08} E. Novak and H. Wo\'zniakowski, Tractability of Multivariate Problems, Volume I: Linear Information,
EMS Tracts in Mathematics 6, European Math. Soc. Publ. House, Z\"urich, 2008.
\bibitem{Schur} I. Schur, \emph{Bemerkungen zur Theorie der beschr\"ankten Bilinearformen mit unendlich vielen Ver\"anderlichen},
J. Reine Angew. Math. 140 (1911), 1--28
\bibitem{Stewart} J. Stewart, \emph{Positive definite functions and generalizations, an historical survey}, Rocky Mount. J. Math. 6 (1976), 409--434
\bibitem{Wick} G.C. Wick, \emph{The evaluation of the collision matrix}, Phys. Rev. 80 (2) (1950), 268--272
\end{document}